\documentclass[12pt]{article}

\usepackage{amsmath, amssymb}
\usepackage{amsthm}
\usepackage[margin=1in]{geometry}
\usepackage{ifpdf}
\ifpdf
  \usepackage[hidelinks]{hyperref}
\else
  \usepackage[dvipdfmx,hidelinks]{hyperref}
\fi

\title{An alternative proof of Miyashita's theorem in a skew polynomial ring II}
\author{Satoshi Yamanaka\\[0.25em]
\small Department of Integrated Science and Technology\\
\small National Institute of Technology, Tsuyama College\\
\small Okayama 708-8509, JAPAN\\
\small \texttt{yamanaka@tsuyama.kosen-ac.jp}}
\date{}

\newtheorem{Theorem}{\quad Theorem}[section]

\newtheorem{Corollary}[Theorem]{\quad Corollary}
\newtheorem{Proposition}[Theorem]{\quad Proposition}
\newtheorem{Lemma}[Theorem]{\quad Lemma}

\allowdisplaybreaks[1]

\begin{document}

\maketitle

\begin{abstract}
Y. Miyashita gave characterizations of a separable polynomial and a Hirata separable polynomial 
in skew polynomial rings. 
In the previous paper, the author and S. Ikehata gave direct and elementary proofs of Miyashita's theorems  
in skew polynomial rings of automorphism type $B[X;\rho]$ and derivation type $B[X;D]$, respectively.  
The purpose of this paper is to give proofs for them in the general skew polynomial ring $B[X;\rho,D]$.
\end{abstract}

\vspace{0.5cm}
\noindent
{\small {\bf Note for the arXiv version.} This manuscript is the author's accepted manuscript of the article ``An Alternative Proof of Miyashita's Theorem in a Skew Polynomial Ring II,'' published in {\it Gulf Journal of Mathematics} {\bf 5} (2017), no.~4, 9--17. The content of this manuscript corresponds to the published article. The version of record is available at \href{https://doi.org/10.56947/gjom.v5i4.116}{10.56947/gjom.v5i4.116}.}
\vspace{0.5cm}

{\bf 2010 Mathematics Subject Classification:} 16S32, 16S36\\

{\bf Keywords:} separable extension, separable polynomial, Hirata separable extension, Hirata separable polynomial, skew polynomial ring, derivation

\section{Introduction and Preliminaries}

Throughout this paper, all rings has an identity $1$.  
Let $A/B$ be  a ring extension with common identity $1$.  
$A/B$ is called {\it separable} if the $A$-$A$-homomorphism of $A \otimes_BA$ onto $A$
defined by $a \otimes b \mapsto ab$ splits, and $A/B$ is called {\it Hirata separable} if $A \otimes_BA$ is
$A$-$A$-isomorphic to a direct summand of a finite direct sum of copies of $A$. 
It is well known that 
a Hirata separable extension is  separable. 
%The notion of Hirata separable extensions was
%introduced by K. Hirata as a generalization of Azumaya algebras. 
We denote 
\begin{align*}
(A \otimes_B A)^A &= \{ \mu \in A \otimes_B A \ | \ z \mu = \mu z \ {\rm for \ any} \ z \in A \},\\
V_A(B) &= \{ g \in A \ | \ \alpha g = g \alpha  \ {\rm for \ any} \ \alpha \in B \}.
\end{align*}
%$$
%V_A(B) = \{ x \in A \ | \ xb = bx  \ (b \in B) \}.
%$$

Concerning separable extensions and Hirata separable extensions, 
the followings  are well known.

%\medskip

\begin{Lemma}{\rm (\cite[Definition 2]{HS})}\label{S}
$A/B$ is separable if and only if there exists $\sum_i z_i \otimes w_i \in (A\otimes_B A)^A$
such that $\sum_i z_i w_i =1$.
\end{Lemma}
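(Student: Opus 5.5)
The plan is to prove both directions by translating a splitting of the multiplication map $m\colon A\otimes_B A\to A$, $a\otimes b\mapsto ab$, into the image of $1$ under that splitting. The key observation is that an $A$-$A$-homomorphism $s\colon A\to A\otimes_B A$ is completely determined by $\mu = s(1)$, since $s(a)=a\mu=\mu a$. Conversely, any element $\mu$ commuting with all of $A$ defines such a homomorphism.

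For the forward direction, assume $A/B$ is separable. Then there is an $A$-$A$-homomorphism $s\colon A\to A\otimes_B A$ with $m\circ s=\mathrm{id}_A$.
\begin{itemize}
\item Put $\mu=\sum_i z_i\otimes w_i := s(1)$.
\item For any $z\in A$, bimodule linearity gives $z\mu = s(z\cdot 1)=s(z)=s(1\cdot z)=\mu z$. Hence $\mu\in (A\otimes_B A)^A$.
\item Applying $m$ gives $\sum_i z_iw_i = m(s(1)) = 1$.
\end{itemize}

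For the converse, suppose $\mu=\sum_i z_i\otimes w_i\in (A\otimes_B A)^A$ with $\sum_i z_iw_i=1$.
\begin{itemize}
\item Define $s\colon A\to A\otimes_B A$ by $s(a)=a\mu$.
\item This map is clearly left $A$-linear. It is also right $A$-linear, because $s(ab)=ab\mu=a\mu b=s(a)b$, using $\mu b = b\mu$.
\item Since $m$ is an $A$-$A$-homomorphism, $m(s(a)) = a\,m(\mu) = a\sum_i z_iw_i = a$. So $s$ splits $m$, and $A/B$ is separable.
\end{itemize}

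There is no real obstacle here. The only point requiring care is the well-definedness of the bimodule actions on $A\otimes_B A$, which is standard. The essential step is recognizing that $A$-$A$-homomorphisms out of $A$ correspond exactly to $A$-centralizing elements of the target, via $s\mapsto s(1)$.
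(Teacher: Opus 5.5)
Your argument is correct and complete: it is the standard correspondence $s\mapsto s(1)$ between $A$-$A$-homomorphisms $A\to A\otimes_BA$ and elements of $(A\otimes_BA)^A$, with the splitting condition $m\circ s=\mathrm{id}_A$ becoming $\sum_i z_iw_i=1$. The paper gives no proof of its own and simply cites Hirata--Sugano, where this element-wise condition is essentially taken as the definition, so your write-up supplies the routine verification that the citation leaves implicit.
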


\begin{Lemma} {\rm (\cite[Proposition 1]{S1})}\label{HS}
$A/B$ is Hirata separable if and only if there exists 
$g_i \in V_A(B)$ and 
$\sum_j z_{ij} \otimes w_{ij} \in (A\otimes_BA)^A$ such that
$1 \otimes 1 = \sum_{i}g_i\sum_j z_{ij} \otimes w_{ij} = \sum_{i}\sum_j z_{ij}\otimes w_{ij} g_i$.
\end{Lemma}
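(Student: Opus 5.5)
The plan is to prove Lemma~\ref{HS} directly from the definition: $A/B$ is Hirata separable when $A\otimes_B A$ is $A$-$A$-isomorphic to a direct summand of some $A^{(n)}$. The bridge is the identification
\[
\operatorname{Hom}_{A\text{-}A}(A, A\otimes_B A)\cong (A\otimes_B A)^A, \qquad \phi \mapsto \phi(1),
\]
together with
\[
\operatorname{Hom}_{A\text{-}A}(A\otimes_B A, A)\cong V_A(B), \qquad \psi \mapsto \psi(1\otimes 1).
\]
The first is clear because an $A$-$A$-map out of $A$ is determined by where it sends $1$, and that image must commute with every element of $A$. For the second, $\psi(1\otimes 1)=g$ satisfies $\alpha g=\psi(\alpha\otimes 1)=\psi(1\otimes\alpha)=g\alpha$ for $\alpha\in B$. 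Conversely, each $g\in V_A(B)$ gives a well-defined map $x\otimes y\mapsto xgy$, and well-definedness is exactly $B$-balancedness, which holds because $g$ centralizes $B$.

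For the ``only if'' direction, a direct summand embedding gives $A$-$A$-maps $f\colon A\otimes_B A\to A^{(n)}$ and $h\colon A^{(n)}\to A\otimes_B A$ with $h\circ f=\mathrm{id}$. Write $f=(f_1,\dots,f_n)$ with $f_i(x\otimes y)=xg_iy$ for some $g_i\in V_A(B)$. Write $h(e_i)=\sum_j z_{ij}\otimes w_{ij}\in (A\otimes_BA)^A$. Evaluating $h\circ f$ at $1\otimes 1$ gives
\[
1\otimes 1=\sum_i g_i\cdot\Bigl(\sum_j z_{ij}\otimes w_{ij}\Bigr),
\]
where we use that $h(e_i a)=h(e_i)a=a\,h(e_i)$ since $h(e_i)$ is $A$-centralizing. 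The two displayed forms in the statement coincide for exactly this reason: $g_i$ may be moved to either side of the centralizing tensor.

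For the ``if'' direction, run the same construction backwards. Define $f$ from the $g_i$ and $h$ from the tensors. Then $h\circ f$ is an $A$-$A$-endomorphism of $A\otimes_B A$ that fixes $1\otimes 1$. Since $A\otimes_B A$ is generated by $1\otimes 1$ as an $A$-$A$-bimodule, $h\circ f=\mathrm{id}$, and $A\otimes_B A$ is a direct summand of $A^{(n)}$.

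The step that needs the most care is the bookkeeping of sides: confirming that $f_i$ is well defined, and that $h$ is a bimodule map precisely because each $h(e_i)$ lies in $(A\otimes_BA)^A$. With those checks in place, everything else is formal.
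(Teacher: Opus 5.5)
Your proof is correct and complete. The identifications $\operatorname{Hom}_{A\text{-}A}(A\otimes_BA,A)\cong V_A(B)$ and $\operatorname{Hom}_{A\text{-}A}(A,A\otimes_BA)\cong(A\otimes_BA)^A$ give the data in both directions. Evaluating $h\circ f$ at $1\otimes 1$ gives the identity in the statement. The converse follows from the fact that $1\otimes 1$ generates $A\otimes_BA$ as a bimodule, and the two forms of the identity agree because each $\sum_j z_{ij}\otimes w_{ij}$ commutes with $g_i$.

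The paper itself gives no proof of this lemma; it only quotes \cite[Proposition 1]{S1}. Your argument is the standard derivation of that cited result, so it makes the paper self-contained at this point rather than taking a different route.
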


Let $B$ a ring with  identity element 1, 
$\rho$  an automorphism of $B$, and $D$ a $\rho$-derivation (i.e.  
an additive endomorphism of $B$ such that $D(\alpha \beta) =D(\alpha) \beta + \rho(\alpha)D(\beta)$ 
for any $\alpha$, $\beta \in B$). 
By $B[X;\rho, D]$, we denote  the skew polynomial ring in which
the multiplication is given by  $\alpha X = X\rho(\alpha) + D(\alpha)$ for any  $\alpha \in B$. 
In particular, we write $B[X; \rho] = B[X;\rho, 0]$   and $B[X; D] = B[X; 1, D]$. 
Moreover, by $B[X;\rho, D]_{(0)}$,  we denote the set of all monic polynomials $f$ 
in $B[X;\rho, D]$ such that $fB[X;\rho, D] = B[X;\rho, D]f$. 
For any polynomial $f \in B[X;\rho, D]_{(0)}$, 
the  residue ring $B[X;\rho, D]/fB[X;\rho, D]$ is a free ring extension of $B$. 
A polynomial $f$ in  $B[X;\rho, D]_{(0)}$ is called {\it separable} (resp. {\it Hirata separable})  in $B[X;\rho, D]$ 
if   $B[X;\rho, D]/fB[X;\rho, D]$ is a separable (resp. Hirata separable) extension of $B$.

Throughout this article, 
let $f = X^m + X^{m-1}a_{m-1} + \cdots + Xa_1 + a_0$ be in $B[X;\rho,D]_{(0)}$ 
%In the following
and 
\begin{align*}
Y_0 &= X^{m-1} + X^{m-2}a_{m-1} + \cdots + Xa_2 + a_1,\\
Y_1 &= X^{m-2} + X^{m-3}a_{m-1} + \cdots + Xa_3 + a_2,\\
\cdot  &\cdot  \cdot  \cdot  \cdot \\
Y_j &= X^{m-j-1} + X^{m-j-2}a_{m-1} + \cdots + Xa_{j+2} + a_{j+1},\\
\cdot  &\cdot  \cdot  \cdot  \cdot \\
Y_{m-2} &= X + a_{m-1},\\
Y_{m-1} &= 1.
\end{align*}
In addition, we shall use the following conventions:% \par
%\par
%$A=B[X;\rho, D]/fB[X;\rho, D]$\par
\begin{align*}
A&=B[X;\rho, D]/fB[X;\rho, D]\\
x &=X + fB[X;\rho, D] \in A\\
y_j &= Y_j + fB[X;\rho, D] \ \ (0 \leq j \leq m-1)\\
B^\rho &=\{ \alpha \in B \ | \ \rho(\alpha)=\alpha \}\\
B^D&=\{ \alpha \in B \ | \ D(\alpha)=0 \}\\
B^{\rho,D} &= B^\rho \cap B^D\\
%V_A(B) 
V_0 &= V_A(B)=\{ g \in A \ | \  \alpha g = g \alpha  \ {\rm for \ any} \ \alpha \in B \}\\
%A_{\rho^{m-1}} 
V_{m-1} &= \{ h \in A \ | \ \rho^{m-1}(\alpha) h = h \alpha  \ {\rm for \ any} \ \alpha \in B \}\\
(A \otimes_B A)^A &= \{ \mu \in A \otimes_B A \ | \ z \mu  = \mu z \ {\rm for \ any} \ z \in A \}
\end{align*}

\if0
We denote $A=B[X;\rho, D]/fB[X;\rho, D]$, $x=X + fB[X;\rho, D] \in A$, and 
\begin{align*}
y_j &= Y_j + fB[X;\rho, D]\\
&= x^{m-j-1} + x^{m-j-2}a_{m-1} + \cdots + xa_{j+2} + a_{j+1} \ \ (0 \leq  j \leq m-1).
\end{align*}
Moreover, we set $B^\rho=\{ \alpha \in B \ | \ \rho(\alpha)=\alpha \}$, $B^D=\{ \alpha \in B \ | \ D(\alpha)=0 \}$, 
and $B^{\rho,D}= B^\rho \cap B^D$. 
\fi

%\medskip

In \cite{M}, Y. Miyashita  studied  separable polynomials and  Hirata separable polynomials in skew polynomial rings. 
He proved the followings. 

%\medskip

\begin{Proposition} {\rm (\cite[Theorem 1.8]{M})} \label{M1}
Let $f = X^m + X^{m-1}a_{m-1} + \cdots + Xa_1 + a_0$ be in $B[X;\rho,D]_{(0)}$. 
Then $f$ is  separable  in $B[X;\rho,D]$ if and only if 
there exists $h \in V_{m-1}$ such that 
 $\sum_{j=0}^{m-1}y_jhx^j = 1$. 
\end{Proposition}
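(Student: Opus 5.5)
The plan is to use Lemma \ref{S} together with an explicit description of $(A\otimes_B A)^A$. First I will fix the structure. $A$ is a free right $B$-module with basis $1,x,\dots,x^{m-1}$, so $A\otimes_B A$ is a free right $B$-module (via the right factor) with basis $\{x^i\otimes x^j\}$. The central element I would aim for is
\[
\Delta_h=\sum_{j=0}^{m-1} y_j h\otimes x^j \qquad (h\in V_{m-1}).
\]
I will also use the dual expression $\sum_j x^j\otimes y_j$, which is the classical "Casimir-type" element of $f$ built from the identity $f=(X-?)\cdot\ldots$; in the skew setting the relevant identity is
\[
Xy_j=y_{j-1}-a_j \ \ (j\ge1),\qquad Xy_0=f-a_0 \equiv -a_0 .
\]

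\textbf{Step 1.} Show that $\sum_j y_j\otimes x^j$ commutes with $x$ in $A\otimes_B A$, i.e.
\[
x\sum_j y_j\otimes x^j=\sum_j y_j\otimes x^{j+1}.
\]
This is the telescoping computation using $xy_j=y_{j-1}-a_j$, $xy_0=-a_0$, and $y_{m-1}=1$, together with the reduction $x^m=-\sum x^ia_i$ in the right factor, moving the $a_j$ across $\otimes_B$.

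\textbf{Step 2.} Determine how $\alpha\in B$ passes through this element. I expect
\[
\alpha\sum_j y_j\otimes x^j=\sum_j y_j\otimes x^j\,\rho^{-(m-1)}(\alpha)
\]
to hold only up to a twist. This is why $h\in V_{m-1}$ is inserted: $h$ converts the $\rho^{m-1}$-twist, coming from $f\in B[X;\rho,D]_{(0)}$ (so that $\alpha f=f\rho^m(\alpha)$), into genuine $B$-centrality of $\Delta_h$. The derivation terms from $D$ should cancel because $f$ is invariant. This is the main obstacle: tracking the $D$-corrections in $\alpha y_j$ and showing that they telescope against $\alpha x^j$ expansions. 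I would first try to avoid the computation entirely by identifying $(A\otimes_BA)^A$ with $\mathrm{Hom}$-type data, namely the map
\[
V_{m-1}\to (A\otimes_BA)^A,\qquad h\mapsto \Delta_h ,
\]
and proving that it is bijective by comparing coefficients of the basis $x^i\otimes x^j$.

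\textbf{Step 3.} Prove surjectivity. Given a central $\mu=\sum_j z_j\otimes x^j$, commutation with $x$ forces $z_j$ to be determined recursively by $z_{m-1}$, namely $z_j=y_jz_{m-1}$. This holds because the coefficient equations from $x\mu=\mu x$ are exactly the recursion defining the $y_j$. Commutation with $B$ then forces $z_{m-1}\in V_{m-1}$. Hence every element of $(A\otimes_BA)^A$ equals $\Delta_h$ for some $h\in V_{m-1}$.

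\textbf{Step 4.} Conclude with Lemma \ref{S}. Its multiplication image is $\sum_j y_jhx^j$, so $f$ is separable if and only if there is an $h\in V_{m-1}$ with $\sum_j y_jhx^j=1$.
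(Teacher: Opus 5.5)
\textbf{Main gap: Step 2 is never proved.} To deduce separability from $\sum_j y_jhx^j=1$ via Lemma~\ref{S}, you must know that $\Delta_h\in(A\otimes_BA)^A$ for an \emph{arbitrary} $h\in V_{m-1}$. In particular you need $\alpha\Delta_h=\Delta_h\alpha$ for all $\alpha\in B$, and this is exactly what the ``if'' direction rests on.

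You identify this as the main obstacle and propose to avoid it by showing $h\mapsto\Delta_h$ is a bijection onto $(A\otimes_BA)^A$ by comparing coefficients. That does not work. The map is only known to land in $(A\otimes_BA)^A$ after the commutation has been proved. Coefficient comparison (your Step 3) gives only the inclusion $(A\otimes_BA)^A\subseteq\{\Delta_h \mid h\in V_{m-1}\}$, which is the ``only if'' direction. Your guessed twist is also inverted: comparing $x^{m-1}$-coefficients shows the only possible relation of that shape is $\alpha\sum_jy_j\otimes x^j=\sum_jy_j\otimes x^j\,\rho^{m-1}(\alpha)$.

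The paper fills this step by a descending induction on $j$. It proves $\alpha y_j=\sum_{i=j}^{m-1}(-1)^{i-j}\binom{i}{j}y_i\rho^{m-i-1}D^{i-j}(\alpha)$, using $xy_{j+1}=y_j-a_{j+1}$ and the formula for $\alpha a_i$ from Corollary~\ref{R}.

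A cheaper fix, closer to what you wanted, also works once $x\Delta_h=\Delta_hx$ is known. Set $\delta(\alpha)=\alpha\Delta_h-\Delta_h\alpha$. Then:
\begin{itemize}
\item $\delta(\alpha)x=x\,\delta(\rho(\alpha))+\delta(D(\alpha))$;
\item the $x^{m-1}$-coefficient of $\delta(\alpha)$ is $\alpha h-h\rho^{-(m-1)}(\alpha)=0$, because $h\in V_{m-1}$;
\item comparing coefficients of $\otimes x^k$ for $k=m-1,m-2,\dots,1$, for all $\alpha$ simultaneously, kills the remaining coefficients of $\delta(\alpha)$ one at a time.
\end{itemize}
Hence $\delta=0$.

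\textbf{Unstated hypotheses in Steps 1 and 3.} Both steps silently use that every $a_j$ commutes with $x$ and with $h$:
\begin{itemize}
\item commuting with $x$ is needed to rewrite $1\otimes\sum_ja_jx^j$ as $-1\otimes x^m$, since $x^m=-\sum_jx^ja_j$;
\item commuting with $h$ is needed to turn $xz_j=z_{j-1}-ha_j$ into $z_{j-1}=xz_j+a_jh$, which is what matches $y_{j-1}=xy_j+a_j$.
\end{itemize}
Neither is automatic for a general $f\in B[X;\rho,D]_{(0)}$. The paper works under the standing hypotheses $\rho D=D\rho$ and $f\in B^\rho[X]$. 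By Lemma~\ref{L2} and Corollary~\ref{R} these give $a_j\in C(B^{\rho,D})$, hence $a_jx=xa_j$ and $a_jh=\rho^{m-1}(a_j)h=ha_j$ for $h\in V_{m-1}$. You need to state and invoke these hypotheses.

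Two consequences for your write-up:
\begin{itemize}
\item In Step 3 you must first extract $z_{m-1}\in V_{m-1}$ from commutation with $B$, and only then run the $x$-recursion to get $z_j=y_jz_{m-1}$.
\item Step 1 must be redone with $h$ inserted. Because $h$ sits in the middle of $\Delta_h$, its commutation with $x$ does not follow formally from that of $\sum_jy_j\otimes x^j$.
\end{itemize}
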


%\medskip

\begin{Proposition} {\rm (\cite[Theorem 1.9]{M})} \label{M2}
Let $f = X^m + X^{m-1}a_{m-1} + \cdots + Xa_1 + a_0$ be in $B[X;\rho,D]_{(0)}$. 
Then $f$ is  Hirata separable  in $B[X;\rho,D]$ if and only if there exist 
$g_i  \in V_0$ and $h_i \in V_{m-1}$ such that 
$\sum_i g_ix^{m-1}h_i = 1$  and $\sum_i g_ix^kh_i = 0 \ (0 \leq k \leq m-2)$.
\end{Proposition}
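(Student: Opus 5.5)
The plan is to reduce Proposition \ref{M2} to Lemma \ref{HS}. The key tool is an explicit description of $(A\otimes_BA)^A$ coming from the free $B$-basis $\{1,x,\dots,x^{m-1}\}$ of $A$. I would first establish, as in the proof of Proposition \ref{M1}, the following fact:
\[
(A\otimes_BA)^A=\Bigl\{\sum_{j=0}^{m-1} y_j h\otimes x^j \;\Big|\; h\in V_{m-1}\Bigr\}.
\]
I also want the elements $\sum_j y_jh\otimes x^j$ to depend injectively on $h$.

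To see this, note that $A\otimes_BA=\bigoplus_{j} A\otimes x^j$ as left $A$-modules, so every element can be written uniquely as $\sum_j z_j\otimes x^j$. Commutation with $\alpha\in B$ forces a twisted relation $\alpha z_j=\dots$. Commutation with $x$ gives a recursion: one uses $x^m=-\sum_k x^ka_k$ and the normality of $f$, i.e.\ $a_k$ satisfies the relations making $fB[X;\rho,D]=B[X;\rho,D]f$. Solving the recursion yields $z_j=y_jz_{m-1}$ with $z_{m-1}=h\in V_{m-1}$. Conversely, such elements are $A$-centralized. I expect this verification of $x\mu=\mu x$ to be the main computational obstacle. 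It is where $D$ enters, through $\alpha x=x\rho(\alpha)+D(\alpha)$, and where the identities $y_{j-1}=xy_j+a_j$ and $x y_0 = -a_0$ (in $A$) are used. If this description is already available from the proof of Proposition \ref{M1}, I would simply quote it.

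Next, I translate the conditions of Lemma \ref{HS}. Take $g_i\in V_0$ and $\mu_i=\sum_j y_jh_i\otimes x^j$ with $h_i\in V_{m-1}$. Since $g_i$ commutes with $B$ and $\mu_i$ is $A$-central, the element $\sum_i g_i\mu_i$ is again $A$-central. The condition $1\otimes1=\sum_i g_i\mu_i$ reads
\[
1\otimes 1=\sum_j\Bigl(\sum_i g_iy_jh_i\Bigr)\otimes x^j .
\]
By uniqueness of coordinates this is equivalent to $\sum_i g_iy_jh_i=\delta_{j0}$. Expanding $y_j=x^{m-j-1}+x^{m-j-2}a_{m-1}+\cdots$, I would move the coefficients $a_k$ across $h_i$ and $g_i$. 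This is possible because $a_kh_i=h_i\rho^{1-m}(a_k)$ and $g_i$ commutes with $a_k$; one may equally rewrite $x^ka$ terms, but it is simpler to apply a triangular induction on $j$ from $m-1$ down to $0$. For $j=m-1$ the condition gives $\sum_i g_ih_i=0$. Inductively, the conditions for $j=m-1,\dots,0$ become equivalent to $\sum_i g_ix^kh_i=0$ for $0\le k\le m-2$ and $\sum_ig_ix^{m-1}h_i=1$, the lower-order correction terms vanishing by the induction hypothesis. This gives exactly the conditions of Proposition \ref{M2}.

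The remaining condition of Lemma \ref{HS} is $1\otimes1=\sum_i\mu_ig_i$. I would show it follows automatically from the first, using a symmetric argument with the right-handed basis. Alternatively, one can observe that $\sum_i\mu_ig_i$ is $A$-central and has the same image $\sum_i g_i\sum_j y_jh_ix^j$ under multiplication. A cleaner route is to rewrite $\sum_i\sum_j y_jh_i\otimes x^jg_i$ using $A\otimes_BA=\bigoplus_j x^j\otimes A$ together with a dual description $\mu=\sum_j x^j\otimes h' y_j'$ of central elements. The same triangular argument then yields the identical conditions $\sum_i g_ix^kh_i=\delta_{k,m-1}$. Together with the converse direction, which consists of reading these steps backwards, this proves Proposition \ref{M2}.
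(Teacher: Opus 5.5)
Your main argument is the paper's own. You quote Lemma \ref{L}, read off $\sum_i g_iy_jh_i=\delta_{j0}$ from the unique coordinates $\sum_j z_j\otimes x^j$, and run the unitriangular induction from $j=m-1$ down to $0$, moving $a_k$ past $h_i$ via $a_kh_i=h_i\rho^{1-m}(a_k)$. This gives $\sum_i g_ix^kh_i=\delta_{k,m-1}$, and the same steps reverse for the converse.

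Your last paragraph, however, is unnecessary and its reasoning is wrong. The second equality $\sum_i\mu_ig_i=1\otimes1$ in Lemma \ref{HS} is automatic. Each $\mu_i$ lies in $(A\otimes_BA)^A$ and $g_i\in A$, so $g_i\mu_i=\mu_ig_i$ for every $i$ by the definition of $(A\otimes_BA)^A$. This is why the paper checks only $\sum_ig_i\mu_i=1\otimes1$.

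Your justifications there do not hold. You claim, here and where you translate the conditions of Lemma \ref{HS}, that $\sum_ig_i\mu_i$ (resp.\ $\sum_i\mu_ig_i$) is $A$-central. That is false: the element is $1\otimes1$, and for $m\ge2$ we have $x\otimes1\neq1\otimes x$ because their coordinates differ. It is only $B$-central, which is all that $g_i\in V_0$ gives. Even if both elements were $A$-central, agreeing under the multiplication map would not make them equal. No right-handed basis or dual description of $(A\otimes_BA)^A$ is needed.

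A smaller point concerns your sketch of Lemma \ref{L}. Passing from $z_{j-1}=xz_j+ha_j$ to $z_j=y_jh$ requires $ha_j=a_jh$. The paper obtains this from the standing hypothesis $a_j\in B^\rho$ together with $h\in V_{m-1}$. The converse commutation with $\alpha\in B$ uses Corollary \ref{R}, which rests on $\rho D=D\rho$ and $f\in B^\rho[X]$. Normality of $f$ alone is not what drives these steps. Quoting Lemma \ref{L} under these hypotheses, as the paper does, is fine.
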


%\medskip

Y. Miyashita proved aboves by making use of the theory of (*)-positively filtered rings. 
However, it seems not easy for one to comprehend his proofs. 
In the previous paper \cite{YI1}, 
the author and S. Ikehata gave direct and elementary proofs of  the above 
propositions in  $B[X;\rho]$ and  $B[X;D]$, 
respectively. 
In this paper, we shall generalize our proofs for the  skew polynomial ring $B[X;\rho,D]$.
%give an alternative proof of  Proposition \ref{M1} and Proposition \ref{M2} 
%in the skew polynomial ring  $B[X;\rho,D]$. 
%The proofs are direct and elementary. 

%\bigskip

Now we shall mention briefly some properties of the coefficients of polynomials in $B[X;\rho,D]_{(0)}$.
 % which have been obtained in {\rm \cite{I1}}.
The following lemma can be proved by a direct computation. 

\begin{Lemma} {\rm (\cite[Lemma 1.1]{I1})}\label{L1}
%If $f= X^m + X^{m-1}a_{m-1} + \cdots + Xa_1 + a_0$
%=X^m+X^{m-1}a_{m-1} + \cdots + Xa_1 + a_0 \in B[X;\rho,D]$ 
Let $f= X^m + X^{m-1}a_{m-1} + \cdots + Xa_1 + a_0$ be in $B[X;\rho,D]$. 
Then $f$ is in $B[X;\rho,D]_{(0)}$ if and only if 
 $\alpha f=f\rho^m(\alpha)$ for any $\alpha \in B$ and  $Xf=f \big( X-\rho(a_{m-1})+a_{m-1}\big)$. 
\end{Lemma}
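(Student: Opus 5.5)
The plan is to prove both implications by comparing the two-sided ideals $fB[X;\rho,D]$ and $B[X;\rho,D]f$ and then comparing coefficients. Write $R=B[X;\rho,D]$. Since $f$ is monic of degree $m$, every element of $R$ can be written uniquely as $fq+r$ with $\deg r<m$, and likewise as $q'f+r'$ with $\deg r' < m$. This is because the leading coefficient of $f$ is $1$, so division works from either side.

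\emph{Necessity.} Assume $fR=Rf$.
\begin{enumerate}
\item Take $\alpha\in B$. Then $\alpha f\in Rf=fR$, so $\alpha f=fq$ for some $q\in R$.
\item Compare degrees. We have $\deg(\alpha f)\le m$, and since $f$ is monic, $\deg(fq)=m+\deg q$. Hence $q\in B$.
\item Compare leading terms. Pushing $\alpha$ past $X^m$ gives $\alpha X^m = X^m\rho^m(\alpha)+(\text{lower terms})$, while $fq$ has leading term $X^mq$. So $q=\rho^m(\alpha)$, which gives $\alpha f=f\rho^m(\alpha)$.
\item Similarly $Xf=fq$ with $\deg q=1$ and $q$ monic, say $q=X+c$.
\item Compare the coefficients of $X^m$ in $Xf = fq$. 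On the left, $Xf=X^{m+1}+X^m a_{m-1}+\cdots$, so the coefficient is $a_{m-1}$. On the right, $fX = X^{m+1}+X^{m-1}a_{m-1}X+\cdots$, and $a_{m-1}X=X\rho(a_{m-1})+D(a_{m-1})$, so $fX$ contributes $\rho(a_{m-1})$ to the $X^m$ coefficient. Also $fc$ contributes $c$.
\item This yields $a_{m-1}=\rho(a_{m-1})+c$, hence $c=a_{m-1}-\rho(a_{m-1})$, which is the stated formula.
\end{enumerate}

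\emph{Sufficiency.} Assume both identities hold.
\begin{enumerate}
\item Show $Rf\subseteq fR$. Since $R$ is generated as a ring by $B$ and $X$, it suffices to show that $Sf\subseteq fR$ for each $S\in\{X\}\cup B$, given $f\cdot R\subseteq fR$.
\item Use induction on monomials $X^k\beta$. We have $X^k\beta f=X^kf\rho^m(\beta)$. Moreover $X^kf=X^{k-1}f(X+c)\in X^{k-1}fR$, and iterating gives $X^kf\in fR$. Hence $Rf\subseteq fR$.
\item For the reverse inclusion, use that $\rho$ is an automorphism. Then $f\alpha=\rho^{-m}(\alpha)f\in Rf$.
\item It remains to show $fX\in Rf$. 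From $Xf=fX+fc$ we get $fX=Xf-fc$.
\item Here $fc = \rho^{-m}(c)f$ by the first identity. Therefore $fX=(X-\rho^{-m}(c))f\in Rf$.
\item By the same monomial induction, $fR\subseteq Rf$.
\end{enumerate}

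\emph{Main obstacle.} The only real work is the coefficient comparison in step 5 of necessity. There one must track exactly which terms of $fX$ land in degree $m$, namely only $X^{m-1}a_{m-1}X$ and $X^m\cdot(\text{const})$, and check that the $D$-terms only affect lower degrees. This is routine, since $D$ lowers the $X$-degree by one.
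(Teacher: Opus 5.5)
Your proof is correct. Necessity comes from the degree and leading-coefficient comparisons in $\alpha f = fq$ and $Xf = fq$, using that $f$ is monic so $\deg(fq)=m+\deg q$. Sufficiency comes from the fact that $\{r : rf\in fR\}$ and $\{r : fr\in Rf\}$ are subrings containing $B$ and $X$, with surjectivity of $\rho$ giving $f\alpha=\rho^{-m}(\alpha)f$ and $fX=(X-\rho^{-m}(c))f$. The paper does not prove this lemma itself; it cites Ikehata and says it follows by direct computation, and your argument is such a computation.
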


%\medskip

%We shall state the following.

In virtue of Lemma \ref{L1}, we obtain the following. 

\begin{Lemma}\label{L2}
Assume that $\rho D = D \rho$ and 
$f= X^m + X^{m-1}a_{m-1} + \cdots + Xa_1 + a_0$ is in $B[X;\rho,D]$. 
Then $f$ is in $B[X;\rho,D]_{(0)}$ if and only if
%$f= X^m + X^{m-1}a_{m-1} + \cdots + Xa_1 + a_0$ is in $B[X;\rho,D]_{(0)}$ if and only if
\begin{description}
\item{{\rm (1)}} ${\displaystyle a_i \rho^m(\alpha)= \sum_{j=i}^{m} \binom{j}{i}\rho^{j} D^{j-i}(\alpha)a_{j}}$
  \ \ $(\alpha \in B, \ 0 \leq i \leq m-1, \ a_m=1)$
\item{{\rm (2)}} $D(a_i)= a_{i-1}-\rho({a_{i-1}})-a_i \big( \rho(a_{m-1})-a_{m-1} \big)$ \ \ $(1 \leq i \leq m-1)$
\item{{\rm (3)}} $D(a_0)=a_0 \big( \rho(a_{m-1})-a_{m-1} \big)$
\end{description}
%In particular, If $f$ is in  $B[X;\rho,D]_{(0)}$ then $\beta a_i = a_i \beta$ for any  $\beta \in B^{\rho,D}$. 
\end{Lemma}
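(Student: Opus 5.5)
We start from Lemma \ref{L1}: $f\in B[X;\rho,D]_{(0)}$ if and only if (a) $\alpha f=f\rho^m(\alpha)$ for all $\alpha\in B$, and (b) $Xf=f\bigl(X-\rho(a_{m-1})+a_{m-1}\bigr)$. The plan is to show that (a) is equivalent to condition (1), and (b) is equivalent to conditions (2) and (3). Both equivalences come from expanding each side in the left $B$-basis $\{X^k\}$ and comparing coefficients. Since $B[X;\rho,D]$ is a free right $B$-module on $1,X,X^2,\dots$, coefficient comparison gives genuine equivalences, so both directions are obtained at once.

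For (a), we need a formula moving $\alpha$ past $X^j$. The key step is the commutation formula
\[
\alpha X^j=\sum_{k=0}^{j}\binom{j}{k}X^k\rho^k D^{j-k}(\alpha),
\]
proved by induction on $j$ from $\alpha X=X\rho(\alpha)+D(\alpha)$. This is where $\rho D=D\rho$ is used. In the inductive step we multiply on the right by $X$ and rewrite $\rho^kD^{j-k}(\alpha)X=X\rho^{k+1}D^{j-k}(\alpha)+D\rho^kD^{j-k}(\alpha)$. Commutation lets us write $D\rho^kD^{j-k}=\rho^kD^{j-k+1}$, and Pascal's rule then closes the induction.

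The statement writes (1) with $\rho^{j}D^{j-i}(\alpha)$, while the formula gives $\rho^{i}D^{j-i}(\alpha)$. I will check the indices carefully here, since the exponent of $\rho$ should be the degree $k=i$ of $X^k$. The intended reading is presumably $\rho^iD^{j-i}$, possibly after reindexing.

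Apply the formula to $\alpha f=\sum_{j=0}^m \alpha X^j a_j$, where $a_m=1$. This gives
\[
\alpha f=\sum_{i}X^i\sum_{j\ge i}\binom{j}{i}\rho^iD^{j-i}(\alpha)a_j .
\]
On the other side, $f\rho^m(\alpha)=\sum_i X^ia_i\rho^m(\alpha)$. The coefficients of $X^m$ are $\rho^m(\alpha)$ on both sides and agree automatically. Comparing the coefficients of $X^i$ for $0\le i\le m-1$ yields exactly (1).

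For (b), $Xf=\sum_j X^{j+1}a_j$ is already in normal form, with $a_m=1$. For the right-hand side, write $c=\rho(a_{m-1})-a_{m-1}$, so that
\[
f(X-c)=fX-fc=\sum_j X^ja_jX-\sum_j X^ja_jc .
\]
Each term expands as $X^ja_jX=X^{j+1}\rho(a_j)+X^jD(a_j)$. Comparing coefficients of $X^{k}$:
\begin{itemize}
\item At $X^{m+1}$, both sides give $1$.
\item At $X^m$, the left side gives $a_{m-1}$. The right side gives $\rho(a_{m-1})+D(1)-c=a_{m-1}$, since $D(1)=0$. So this degree holds automatically.
\item At $X^i$ for $1\le i\le m-1$, the comparison reads $a_{i-1}=\rho(a_{i-1})+D(a_i)-a_ic$, which is (2).
\item At $X^0$, the comparison reads $0=D(a_0)-a_0c$, which is (3).
\end{itemize}

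The main obstacle is the commutation formula and keeping the indexing straight. Once that is established, everything else is bookkeeping of coefficients.
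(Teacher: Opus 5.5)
Your argument is the paper's argument: Lemma \ref{L1}, the commutation formula for $\alpha X^j$, and comparison of right coefficients in the basis $\{X^k\}$. Your computations are correct. Your correction of (1) is also right, and it is not a matter of reindexing. For $j=1$ the formula must reduce to $\alpha X=X\rho(\alpha)+D(\alpha)$, so the exponent is $\rho^i$. The $\rho^j$ in the printed (1), which also appears in the paper's own formula for $\alpha X^j$, is simply wrong.

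What you missed is the analogous problem in (2). Your $X^i$-comparison $a_{i-1}=\rho(a_{i-1})+D(a_i)-a_ic$ is correct, and it is exactly what the paper's computation yields. It solves to
\[
D(a_i)=a_{i-1}-\rho(a_{i-1})+a_i\bigl(\rho(a_{m-1})-a_{m-1}\bigr),
\]
with a plus sign, whereas the printed (2) has a minus sign. So your sentence ``which is (2)'' is false as written, and the paper's proof makes the same identification. The printed sign is the wrong one: your $X^0$-comparison gives $D(a_0)=+a_0c$, which matches the printed (3).

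The difference is real. Let $B=\mathbb{Q}[t]/(t^2)$, $\rho(t)=-t$, and $D=\rho-1$. Then $Z=X+1$ satisfies $\alpha Z=Z\rho(\alpha)$. The polynomial $f=Z^2+Zt=X^2+X(2+t)+(1+t)$ satisfies $\alpha f=f\alpha=f\rho^2(\alpha)$ and $Zf=f(Z+2t)$, so $f\in B[X;\rho,D]_{(0)}$. Here $c=-2t$, $D(a_1)=-2t$, $a_0-\rho(a_0)=2t$ and $a_1c=-4t$. The plus-sign version of (2) therefore holds, while the printed one would force $-2t=6t$. The same $f$ also violates the printed (1) for $i=1$.

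So you should state explicitly that you are proving the lemma with $\rho^iD^{j-i}$ in (1) and $+a_i\bigl(\rho(a_{m-1})-a_{m-1}\bigr)$ in (2). With those two corrections your proof is complete.
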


{\it Proof. } 
Let $\alpha$ be  arbitrary element in $B$. 
By Lemma \ref{L1}, $f$ is in $B[X;\rho,D]_{(0)}$ if and only if 
$\alpha f=f\rho^m(\alpha)$  and  $Xf=f(X-\rho(a_{m-1})+a_{m-1})$. 
It is easy to see that $\alpha X^j = \sum_{i=0}^{j}\binom{j}{i} X^i \rho^{j} D^{j-i}(\alpha) a_j$ 
($j \geq 0$) by an induction. 
Hence we obtain 
\begin{align*}
%\alpha f =
\sum_{j=0}^{m} \alpha X^j a_j 
&=\sum_{j=0}^m \left( \sum_{i=0}^{j}\binom{j}{i} X^i \rho^{j} D^{j-i}(\alpha) \right) a_j\\
&=\sum_{i=0}^m X^i \left(  \sum_{j=i}^m \binom{j}{i} \rho^{j} D^{j-i}(\alpha) a_j  \right).
\end{align*}
%By Comparing the coefficients of the both sides, 
This means that $\alpha f=f\rho^m(\alpha)$  implies (1), and conversely. 
By a direct computation, we have 
\begin{align*}
&f \big( X-\rho(a_{m-1})+a_{m-1} \big)\\
&=  \sum_{i=0}^{m-1} X^{i} a_i X - \sum_{i=0}^{m-1}X^i a_i \big( \rho(a_{m-1})-a_{m-1}\big)\\
&= \sum_{i=0}^{m-1} X^{i} \big( X\rho(a_i) +D(a_i) \big)
- \sum_{i=0}^{m-1}X^i a_i \big( \rho(a_{m-1})-a_{m-1} \big) \\
&= \sum_{i=0}^{m-1} X^{i+1} \rho(a_i) + \sum_{i=0}^{m-1}X^i \Big( D(a_i)-a_i \big( \rho(a_{m-1})-a_{m-1} \big) \Big)\\
&= X^m a_{m-1} + \sum_{i=1}^{m-1} X^i \Big( \rho(a_{i-1}) + D(a_i) -a_i \big( \rho(a_{m-1})-a_{m-1} \big)   \Big)\\
& \ \ \ \, + D(a_0) -a_0 \big( \rho(a_{m-1})-a_{m-1} \big).
\end{align*}
This means that  $Xf=f\big( X-\rho(a_{m-1})+a_{m-1} \big)$ implies (2) and (3), and conversely. 
This completes the proof.

%\bigskip
%By Lemma \ref{L2}, we can easily see the following.
%In virtue of Lemma \ref{L2}, we obtain the following. 

\begin{Corollary}\label{R}
Assume that $\rho D = D \rho$ and 
 $f= X^m + X^{m-1}a_{m-1} + \cdots + Xa_1 + a_0$ is in $B[X;\rho,D]_{(0)} \cap B^\rho[X]$. 
Then $f$ is in $C(B^{\rho,D})[X]$, where $C(B^{\rho,D})$ is the center of $B^{\rho,D}$. 
Moreover, 
\[
\alpha a_i = \sum_{j=i}^m (-1)^{j-i} \binom{j}{i} a_j \rho^{m-j}D^{j-i}(\alpha) \ \ (\alpha \in B,  \, 0 \leq j \leq m, \, a_m = 1)
\]
\end{Corollary}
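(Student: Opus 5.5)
The plan has two parts. First, I will use Lemma \ref{L2} to show that every $a_i$ lies in $C(B^{\rho,D})$. Second, I will obtain the displayed identity by computing $f\beta$ with the coefficients of $f$ written on the \emph{left} of the powers of $X$, and comparing with $f\beta=\rho^{-m}(\beta)f$.

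\emph{Step 1: $a_i\in B^{D}$.} Since each $a_i\in B^\rho$, we have $\rho(a_{m-1})-a_{m-1}=0$ and $a_{i-1}-\rho(a_{i-1})=0$. Conditions (2) and (3) of Lemma \ref{L2} then give $D(a_i)=0$ for $0\le i\le m-1$, so every $a_i\in B^{\rho,D}$.

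\emph{Step 2: the $a_i$ commute with $B^{\rho,D}$.} Take $\alpha\in B^{\rho,D}$ in condition (1) of Lemma \ref{L2}. All terms with $j>i$ vanish because $D^{j-i}(\alpha)=0$. Since $\rho^m(\alpha)=\rho^i(\alpha)=\alpha$, condition (1) reduces to $a_i\alpha=\alpha a_i$. Together with Step 1 this gives $a_i\in C(B^{\rho,D})$, so $f\in C(B^{\rho,D})[X]$.

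\emph{Step 3: the displayed formula.} Because $\rho(a_j)=a_j$ and $D(a_j)=0$, we have $a_jX=X\rho(a_j)+D(a_j)=Xa_j$, hence $f=\sum_{j=0}^m a_jX^j$. Since $\rho$ is an automorphism, $B[X;\rho,D]$ is also free as a left $B$-module on $1,X,X^2,\dots$, so left coefficients are unique.

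Rewrite the defining relation as $X\beta=\rho^{-1}(\beta)X-D\rho^{-1}(\beta)$. Using $\rho D=D\rho$, an induction on $j$ (the mirror of the computation of $\alpha X^j$ in the proof of Lemma \ref{L2}) gives
\[
X^j\beta=\sum_{k=0}^{j}(-1)^{j-k}\binom{j}{k}\rho^{-j}D^{j-k}(\beta)X^k .
\]
Hence
\[
f\beta=\sum_{k=0}^m\Big(\sum_{j=k}^m(-1)^{j-k}\binom{j}{k}a_j\rho^{-j}D^{j-k}(\beta)\Big)X^k .
\]
By Lemma \ref{L1}, $\alpha f=f\rho^m(\alpha)$, which is the same as $f\beta=\rho^{-m}(\beta)f=\sum_k\rho^{-m}(\beta)a_kX^k$. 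Put $\beta=\rho^m(\alpha)$ and compare the coefficients of $X^i$. This yields exactly
\[
\alpha a_i=\sum_{j=i}^m(-1)^{j-i}\binom{j}{i}a_j\rho^{m-j}D^{j-i}(\alpha).
\]

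The main obstacle I expect is that a direct binomial inversion of condition (1) is awkward: the powers of $\rho$ in (1) depend on the summation index, so the inversion does not close up cleanly. Passing to left coefficients avoids this, and the remaining work is only the routine induction for $X^j\beta$ and its sign and binomial bookkeeping.
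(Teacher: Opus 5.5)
Your proposal is correct and follows essentially the same route as the paper. Lemma \ref{L2}(2),(3) give $a_i\in B^D$, Lemma \ref{L2}(1) gives centrality, and the identity comes from expanding $f\rho^m(\alpha)=\sum_j a_jX^j\rho^m(\alpha)$ via the formula for $X^j\beta$ and comparing left coefficients with $\alpha f$. You also usefully make explicit two points the paper leaves tacit: $a_jX=Xa_j$ (so $f=\sum_j a_jX^j$), and left coefficients are unique because $\rho$ is an automorphism.
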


{\it Proof. } Let $f= X^m + X^{m-1}a_{m-1} + \cdots + Xa_1 + a_0$ be in $B[X;\rho,D]_{(0)} \cap B^\rho[X]$. 
Then $a_i \in B^\rho$ ($0 \leq i \leq m-1$) implies $a_i \in B^D$ ($0 \leq i \leq m-1$) by Lemma \ref{L2} (2) and (3). 
Therefore $a_i \in C(B^{\rho,D})$ ($0 \leq i \leq m-1$) by Lemma \ref{L2} (1). 
Let $\alpha$ be arbitrary element in $B$. 
By an  easy induction,  we see that 
$
X^{j}\alpha=\sum_{i=0}^{j} (-1)^{j-i} \binom{j}{i}\rho^{-j}D^{j-i}(\alpha)X^{i} \ \ (j \geq 0).
$
Hence we obtain 
\begin{align*}
%\sum_{j=0}^{m} X^j a_j \alpha = 
\sum_{j=0}^{m} a_j X^j  \rho^{m}(\alpha) 
&= \sum_{j=0}^{m} a_j \left( \sum_{i=0}^{j} (-1)^{j-i} \binom{j}{i}\rho^{m-j}D^{j-i}(\alpha)X^{i} \right)\\
&= \sum_{i=0}^{m} \left(  \sum_{j=i}^{m} (-1)^{j-i} \binom{j}{i} a_j \rho^{m-j}D^{j-i}(\alpha) \right)X^{i}
\end{align*}
Therefore $\alpha f = f \rho^m(\alpha)$ implies our assertion. 
This completes the proof.

\section{Main Result}
The conventions and notations employed in the preceding section 
will be used in this section. 
We assume that $\rho D= D\rho$ and 
$f= X^m + X^{m-1}a_{m-1} + \cdots + Xa_1 + a_0$ is in $B[X;\rho,D]_{(0)} \cap B^\rho[X]$. 
Note that $f$ is in $C(B^{\rho,D})[X]$ by Corollary \ref{R}.  
First we shall prove the following.

\begin{Lemma} \label{L}
\[
(A\otimes_BA)^A = \left\{ \left. \sum_{j=0}^{m-1}y_jh\otimes x^j \,  \right|  
\, h \in V_{m-1} \right\}.
\]
\end{Lemma}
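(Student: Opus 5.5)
We prove the two inclusions separately. Throughout we use that $A$ is a free right (and left) $B$-module with basis $1,x,\dots,x^{m-1}$. Hence $A\otimes_BA$ is a free left $A$-module, and every element can be written uniquely as $\sum_{j=0}^{m-1} u_j\otimes x^j$ with $u_j\in A$. Let $\mu=\sum_j u_j\otimes x^j$. Since $A$ is generated as a ring by $B$ and $x$, $\mu$ lies in $(A\otimes_BA)^A$ if and only if $\alpha\mu=\mu\alpha$ for all $\alpha\in B$ and $x\mu=\mu x$. We translate each of these two conditions into conditions on the $u_j$.

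\emph{The condition $x\mu=\mu x$.} We compute $\mu x=\sum_{j=0}^{m-2}u_j\otimes x^{j+1}+u_{m-1}\otimes x^m$. In $A$ we have $x^m=-\sum_{i<m} x^i a_i$, and the $a_i$ are central in $B^{\rho,D}$, lie in $B^\rho\cap B^D$, and commute past $x$ (Corollary \ref{R}). Using this we can move $a_i$ across the tensor sign:
\[
u_{m-1}\otimes x^i a_i=u_{m-1}\otimes a_ix^i=u_{m-1}a_i\otimes x^i .
\]
Comparing coefficients of $x^j$ then gives
\[
xu_0=-u_{m-1}a_0,\qquad xu_j=u_{j-1}-u_{m-1}a_j \quad (1\le j\le m-1).
\]
Now set $h=u_{m-1}$. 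Solving the recursion downward from $j=m-1$ gives $u_{j-1}=xu_j+ha_j$, so by induction $u_j=\sum_{k} x^{k-j-1}ha_k$, summed over $j+1\le k\le m$ with $a_m=1$. Since each $a_k$ commutes with $h$ and $x$ (we must verify that $a_k$ commutes with elements of $V_{m-1}$, using $\rho^{m-1}(a_k)=a_k$), this equals $y_jh$. The remaining equation $xu_0=-ha_0$ reads $y_{-1}h:=fh=0$, which holds automatically in $A$. So $x\mu=\mu x$ holds exactly when $u_j=y_jh$ for all $j$ with $h=u_{m-1}$. The main obstacle is the commutation of $a_k$ with $h$ and $x$. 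In the computation we only need $ha_k=a_kh$ when $h\in V_{m-1}$, which follows from $\rho^{m-1}(a_k)=a_k$. Before imposing the $B$-condition, I would instead keep the $a_k$ on the right, writing $u_j=\sum_k x^{k-j-1}ha_k$. This expression still equals $y_jh$ once $h\in V_{m-1}$, which is established below.

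\emph{The condition $\alpha\mu=\mu\alpha$.} Moving $\alpha$ across $x^j$ requires the formula $x^j\alpha=\sum_i(-1)^{j-i}\binom{j}{i}\rho^{-j}D^{j-i}(\alpha)x^i$ from the proof of Corollary \ref{R}. Comparing the top coefficient $x^{m-1}$ gives $\alpha h=h\rho^{-(m-1)}(\alpha)$, i.e.\ $h\in V_{m-1}$. So the $B$-condition forces $h\in V_{m-1}$, which proves $\subseteq$.

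\emph{The reverse inclusion.} Let $h\in V_{m-1}$ and $\mu=\sum_j y_jh\otimes x^j$. The computation above already gives $x\mu=\mu x$. It remains to show $\alpha\mu=\mu\alpha$ for $\alpha\in B$. I would avoid comparing all the lower coefficients directly. Instead, I would use the identity
\[
\sum_j Y_j\otimes X^j\,\rho^{m-1}(\alpha)=\sum_j\alpha'\ldots,
\]
or more cleanly the following route. Note that $\alpha\mu-\mu\alpha$ again commutes with $x$; this needs that $\alpha$ commutes with $\mu$ modulo an $x$-commuting discrepancy. Concretely, I would show that $\nu=\alpha\mu-\mu\alpha$ satisfies $x\nu=\nu x'$ for a suitable twist, and that the top coefficient of $\nu$ is $\alpha h-h\rho^{-(m-1)}(\alpha)=0$. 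By the uniqueness established in the recursion, an element satisfying the $x$-relation with vanishing top coefficient is zero. This gives $\nu=0$ and finishes the proof.

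The delicate point is whether $\nu$ satisfies the same $x$-recursion. Here $x\alpha=\rho^{-1}(\alpha)x-\rho^{-1}D(\alpha)$, so $x\nu=\rho^{-1}(\alpha)x\mu-\rho^{-1}D(\alpha)\mu-\ldots$. If this twisting becomes messy, I would instead verify directly, using Corollary \ref{R}'s formula for $\alpha a_i$ together with Lemma \ref{L2}(1), that $\sum_j Y_j\otimes X^j$ intertwines $\rho^{m-1}(\alpha)$ on the left with $\alpha$ on the right. This is a binomial-coefficient computation generalizing \cite{YI1}, and I expect it to be the main computational obstacle.
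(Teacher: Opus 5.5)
\paragraph{Review.} Your inclusion $\subseteq$ is correct. It is the paper's argument with the two conditions used in the other order: the paper first extracts $h=z_{m-1}\in V_{m-1}$ from the $B$-condition and then solves the $x$-recursion. Your check of $x\mu=\mu x$ for $h\in V_{m-1}$ is also fine.

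\paragraph{The gap.} You never establish $\alpha\mu=\mu\alpha$, which is the other half of $\supseteq$. Your main route does not work as formulated, because $\nu=\alpha\mu-\mu\alpha$ satisfies no self-contained relation $x\nu=\nu x'$. Write $\nu_\beta=\beta\mu-\mu\beta$. Then $x\mu=\mu x$ and $x\alpha=\rho^{-1}(\alpha)x-\rho^{-1}D(\alpha)$ give
\[
x\,\nu_\alpha=\nu_{\rho^{-1}(\alpha)}\,x-\nu_{\rho^{-1}D(\alpha)}.
\]
This ties $\nu_\alpha$ to two \emph{other} elements. So your uniqueness statement (an element commuting with $x$ whose top coefficient vanishes is zero) cannot be applied to $\nu_\alpha$.

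Your fallback is left undone, and its target is reversed. What is needed is $\alpha\sigma=\sigma\rho^{m-1}(\alpha)$ for $\sigma=\sum_j y_j\otimes x^j$. The version you state, $\rho^{m-1}(\alpha)\sigma=\sigma\alpha$, is false in general: for $f=X^2\in B[X;\rho]$ the coefficients of $1\otimes x$ are $\rho(\alpha)$ and $\rho^{-1}(\alpha)$.

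That computation is exactly the substance of the paper's proof. It shows
\[
\alpha y_j=\sum_{i=j}^{m-1}(-1)^{i-j}\binom{i}{j}\,y_i\,\rho^{m-i-1}D^{i-j}(\alpha)\qquad(0\le j\le m-1)
\]
by downward induction on $j$. The steps are:
\begin{itemize}
\item write $y_j=xy_{j+1}+a_{j+1}$;
\item move $\alpha$ past $x$ via $\alpha x=x\rho(\alpha)+D(\alpha)$;
\item apply the induction hypothesis to $\rho(\alpha)y_{j+1}$ and $D(\alpha)y_{j+1}$;
\item use $xy_i=y_{i-1}-a_i$ and the formula for $\alpha a_{j+1}$ from Corollary~\ref{R};
\item collapse the sums with Pascal's rule.
\end{itemize}
Finally, $h\in V_{m-1}$ turns $h\,\rho^{-i}D^{i-j}(\alpha)$ into $\rho^{m-i-1}D^{i-j}(\alpha)\,h$.

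\paragraph{How the twist idea can be rescued.} It works, and is then shorter than the paper's computation, if you run it over all of $B$ at once rather than for a single $\alpha$.
\begin{itemize}
\item Write $\nu_\beta=\sum_{k=0}^{m-1}v_k(\beta)\otimes x^k$.
\item Since $h\in V_{m-1}$, $v_{m-1}(\beta)=\beta h-h\rho^{-(m-1)}(\beta)=0$ for every $\beta\in B$. So the term $v_{m-1}(\rho^{-1}(\alpha))\otimes x^m$ in $\nu_{\rho^{-1}(\alpha)}x$ drops out.
\item Comparing coefficients of $x^k$ $(1\le k\le m-1)$ in the displayed relation gives $x\,v_k(\alpha)=v_{k-1}(\rho^{-1}(\alpha))-v_k(\rho^{-1}D(\alpha))$.
\item Hence $v_k\equiv 0$ on $B$ forces $v_{k-1}\equiv 0$ on $B$, since $\rho$ is surjective. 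Downward induction on $k$ gives $\nu_\alpha=0$.
\end{itemize}
This uses only $a_i\in B^{\rho,D}$ (for $x\mu=\mu x$). It avoids the binomial bookkeeping and the formula for $\alpha a_i$ entirely. But this argument is not in your proposal, so as submitted the $B$-invariance in the reverse inclusion is unproved.
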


{\it Proof. }  %The proof of this lemma is the same as the previous one. 
%Then we outline the proof. 
%See the previous one to check the detail of computations in the proof.
Since $\{ 1, x, x^2, \cdots , x^{m-1} \}$ is a free $B$-basis for $A$, 
every element in $A\otimes_BA$ has a form $\sum_{j=0}^{m-1}z_j\otimes x^j$ for some 
$z_j \in A$. 
Let $\sum_{j=0}^{m-1}z_j\otimes x^j$ be in  $(A\otimes_BA)^A$ 
and $\alpha$ arbitrary element  in $B$. 
Since $\alpha\left( \sum_{j=0}^{m-1}z_j\otimes x^j \right) = \left( \sum_{j=0}^{m-1}z_j\otimes x^j \right)\alpha$ 
and  $x^{j}\alpha=\sum_{i=0}^{j} (-1)^{j-i} \binom{j}{i}\rho^{-j}D^{j-i}(\alpha)x^{i}$ ($j \geq 0$), 
we have
\begin{align*}
\sum_{j=0}^{m-1}\alpha z_j\otimes x^j
&= \sum_{j=0}^{m-1} z_j\otimes x^j\alpha\\
&= \sum_{j=0}^{m-1} z_j\otimes \left(\sum_{i=0}^{j} (-1)^{j-i} \binom{j}{i} \rho^{-j}D^{j-i}(\alpha )x^{i}\right)\\
&= \sum_{j=0}^{m-1} z_j \Big(\sum_{i=0}^{j} (-1)^{j-i} \binom{j}{i}\rho^{-j}D^{j-i}(\alpha ) \Big)\otimes x^{i}\\
&= \sum_{i=0}^{m-1} \left( \sum_{j=i}^{m-1} (-1)^{j-i} \binom{j}{i}z_j\rho^{-j}D^{j-i}(\alpha ) \right) \otimes x^{i}\\
&= \sum_{j=0}^{m-1} \left( \sum_{i=j}^{m-1} (-1)^{i-j} \binom{i}{j} z_i\rho^{-i}D^{i-j}(\alpha ) \right) \otimes x^{j}.
\end{align*}
Hence we obtain %$\alpha z_j= \sum_{i=j}^{m-1} \binom{i}{j}(-1)^{i-j}z_i\rho^{-i}D^{i-j}(\alpha)$
\[
\alpha z_j= \sum_{i=j}^{m-1} (-1)^{i-j} \binom{i}{j}z_i\rho^{-i}D^{i-j}(\alpha ) \ \ (0 \leq j \leq m-1).
\]
Now we let $h=z_{m-1}$. 
Obviously, $h$ is in $V_{m-1}$. 
%$\alpha h = h \rho^{-(m-1)}(\alpha)$, namely
%$$
%\rho^{m-1}(\alpha) h = h \alpha \ \ (\alpha \in B).
%$$
Since $x \left( \sum_{j=0}^{m-1}z_j\otimes x^j \right) = \left( \sum_{j=0}^{m-1}z_j\otimes x^j \right) x$ and 
$x^m = - \sum_{j=0}^{m-1}x^ja_j = - \sum_{j=0}^{m-1}a_jx^j$, 
we see that 
\begin{align*}
\sum_{j=0}^{m-1}xz_j\otimes x^j 
&= \sum_{j=0}^{m-1}z_j\otimes x^{j+1}\\
&= \sum_{j=0}^{m-2}z_j\otimes x^{j+1}+h\otimes x^m\\
&= \sum_{j=1}^{m-1}z_{j-1}\otimes x^{j}+h\otimes \left( -\sum_{j=0}^{m-1}a_jx^j \right)\\
&= \sum_{j=1}^{m-1}z_{j-1}\otimes x^j-\sum_{j=0}^{m-1}ha_j\otimes x^j\\
%&= \sum_{j=1}^{m-1}(z_{j-1}-z_{m-1}a_j)\otimes x^j-z_{m-1}a_0\otimes 1
&= - ha_0\otimes 1 + \sum_{j=1}^{m-1}(z_{j-1}-ha_j)\otimes x^j.
\end{align*}
It follows that 
$xz_j = z_{j-1}-ha_j$  ($1 \leq j \leq m-1$) and  $xz_0  = -ha_0$.
Noting that $h a_j = \rho^{m-1}(a_j)h = a_j h$ $(0 \leq j \leq m-1)$, we have
$z_{j-1} = x z_j + a_j h$  ($1 \leq j \leq m-1$) and  $x z_0 = -a_0 h$. 
This implies $z_j = y_jh$  ($0 \leq j \leq m-1$), inductively.

Conversely, assume that $h$ is in $V_{m-1}$.  
To show $ \sum_{j=0}^{m-1}y_jh\otimes x^j \in (A \otimes_BA)^A$, 
it is suffice to prove that 
$
x \left( \sum_{j=0}^{m-1}y_jh\otimes x^j \right)
= \left( \sum_{j=0}^{m-1}y_jh\otimes x^j \right)x$ and  
$\alpha \left( \sum_{j=0}^{m-1}y_jh\otimes x^j \right)
= \left( \sum_{j=0}^{m-1}y_jh\otimes x^j \right)\alpha$ 
for any $\alpha \in B$. 
Noting that $xy_j  = y_{j-1} - a_j \  (1 \leq j \leq m-1)$ and  $xy_0  = -a_0$, 
we obtain 
\begin{align*}
x \left( \sum_{j=0}^{m-1}y_jh\otimes x^j \right)
%&= \sum_{j=0}^{m-1}xy_jh\otimes x^j\\
&= xy_0h\otimes 1 + \sum_{j=1}^{m-1}xy_j h\otimes x^j\\
&= (-a_0) h\otimes 1 + \sum_{j=1}^{m-1}(y_{j-1}-a_j)h\otimes x^j\\
&= h \otimes (-a_0)+ \sum_{j=1}^{m-1} (-a_j)h\otimes x^j + \sum_{j=1}^{m-1}y_{j-1}h\otimes x^j   \\
&= h \otimes (-a_0) + h\otimes \left( -\sum_{j=1}^{m-1}x^ja_j \right) + \sum_{j=0}^{m-2}y_{j}h\otimes x^{j+1} \\
%&= h \otimes (-\sum_{j=0}^{m-1}x^ja_j) + \sum_{j=0}^{m-2}y_{j}h\otimes x^{j+1}\\
&= h\otimes x^m + \sum_{j=0}^{m-2}y_{j}h\otimes x^{j+1}\\
&= \left( \sum_{j=0}^{m-1}y_jh\otimes x^j \right) x.
\end{align*}
Next, for any $\alpha \in B$, we have 
%$\alpha \left( \sum_{j=0}^{m-1}y_jh\otimes x^j \right)  
%= \sum_{j=0}^{m-1}\alpha y_jh\otimes x^j$ and
\begin{align*}
%\alpha(\sum_{j=0}^{m-1}y_jh\otimes x^j) & = \sum_{j=0}^{m-1}\alpha y_jh\otimes x^j, \ {\rm and }\\
%\end{align*}
%\begin{align*}
\left( \sum_{j=0}^{m-1}y_jh\otimes x^j \right) \alpha 
%& = \sum_{j=0}^{m-1}y_jh\otimes x^j\alpha \\
& = \sum_{j=0}^{m-1}y_jh\otimes \left( \sum_{i=0}^j (-1)^{j-i} \binom{j}{i}\rho^{-j}D^{j-i}(\alpha)x^i \right)  \\
&= \sum_{j=0}^{m-1} y_jh \left( \sum_{i=0}^{j} (-1)^{j-i} \binom{j}{i}\rho^{-j}D^{j-i}(\alpha) \right) \otimes x^i \\
& = \sum_{i=0}^{m-1} \left( \sum_{j=i}^{m-1} (-1)^{j-i} \binom{j}{i} y_j \rho^{m-j-1}D^{j-i}(\alpha) \right) h \otimes x^i \\
%& = \sum_{i=0}^{m-1} \Big(\sum_{j=i}^{m-1}y_j\binom{j}{i}(-1)^{j-i}\rho^{m-j-1}D^{j-i}(\alpha)  h \Big)\otimes x^i \\
& = \sum_{j=0}^{m-1} \left( \sum_{i=j}^{m-1}(-1)^{i-j} \binom{i}{j} y_i \rho^{m-i-1}D^{i-j}(\alpha)  \right)h \otimes x^j.
\end{align*}
Hence to show 
$\alpha \left( \sum_{j=0}^{m-1}y_jh\otimes x^j \right) = 
\left(\sum_{j=0}^{m-1}y_jh\otimes x^j \right)\alpha$,
it is suffices to prove that 
\begin{align*}
\alpha y_j = \sum_{i=j}^{m-1}(-1)^{i-j} \binom{i}{j} y_i \rho^{m-i-1}D^{i-j}(\alpha) \ \ (0 \leq j \leq m-1).
\end{align*}  
We shall show it by induction. Since $y_{m-1} = 1$, it is true when $j = m-1$. For some 
$j \ (0 \leq j \leq m-2)$, we assume that 
\begin{align*}
\alpha y_{j+1} = \sum_{i=j+1}^{m-1} (-1)^{i-j-1}\binom{i}{j+1} y_i \rho^{m-i-1}D^{i-j-1}(\alpha).
\end{align*}
Noting that
$\alpha a_j = \sum_{i=j}^m (-1)^{i-j} \binom{i}{j} a_i \rho^{m-i}D^{i-j}(\alpha)$ ($0 \leq j \leq m,$, $a_m = 1$)
and $xy_{i} =y_{i-1} - a_{i} \ (1 \leq i \leq m-1)$, 
we have
\begin{align*}
\alpha  y_j &=  \alpha xy_{j+1}+\alpha a_{j+1}\\
&=   x\rho(\alpha) y_{j+1}+D(\alpha)y_{j+1} +\alpha a_{j+1}\\
 &= x \left( \sum_{i=j+1}^{m-1} (-1)^{i-j-1} \binom{i}{j+1} y_i\rho^{m-i-1}D^{i-j-1}( \rho(\alpha)) \right)\\
 & \ \ \   \ \ +\sum_{i=j+1}^{m-1}(-1)^{i-j-1}\binom{i}{j+1} y_i \rho^{m-i-1}D^{i-j-1}(D(\alpha))\\
 & \ \ \   \ \ +\sum_{i=j+1}^m (-1)^{i-j-1} \binom{i}{j+1} a_i \rho^{m-i}D^{i-j-1}(\alpha)\\
&=\sum_{i=j+1}^{m-1} (-1)^{i-j-1} \binom{i}{j+1} (y_{i-1}-a_i)\rho^{m-i}D^{i-j-1}(\alpha)\\
  &  \ \ \ \ \  -\sum_{i=j+1}^{m-1} (-1)^{i-j} \binom{i}{j+1} y_i \rho^{m-i-1}D^{i-j}(\alpha)\\
  &   \ \ \ \ \ +\sum_{i=j+1}^m (-1)^{i-j-1} \binom{i}{j+1} a_i \rho^{m-i}D^{i-j-1}(\alpha)\\
&=\sum_{i=j+1}^{m-1} (-1)^{i-j-1} \binom{i}{j+1} y_{i-1}\rho^{m-i}D^{i-j-1}(\alpha) \\
 & \ \ \ \ \ - \sum_{i=j+1}^{m-1} (-1)^{i-j-1} \binom{i}{j+1} a_i\rho^{m-i}D^{i-j-1}(\alpha)\\
  &  \ \ \ \ \  -\sum_{i=j+1}^{m-1} (-1)^{i-j} \binom{i}{j+1} y_i \rho^{m-i-1}D^{i-j}(\alpha)\\
  &   \ \ \ \ \ +\sum_{i=j+1}^{m}  (-1)^{i-j-1} \binom{i}{j+1} a_i \rho^{m-i}D^{i-j-1}(\alpha) \\
%+(-1)^{m-j-1} \binom{m}{j+1}D^{m-j-1}(\alpha) \\
&= \sum_{i=j}^{m-2} (-1)^{i-j} \binom{i+1}{j+1} y_{i} \rho^{m-i-1}D^{i-j}(\alpha)\\
& \ \ \ \ \ -\sum_{i=j+1}^{m-1}(-1)^{i-j}\binom{i}{j+1} y_i \rho^{m-i-1}D^{i-j}(\alpha)\\
& \ \  \ \ \ + (-1)^{m-j-1}\binom{m}{j+1}D^{m-j-1}(\alpha)\\
&= y_j\rho^{m-j-1}(\alpha)\\
& \ \ \ \ \ + \sum_{i=j+1}^{m-2} (-1)^{i-j}  \left\{ \binom{i+1}{j+1} - \binom{i}{j+1}\right\} y_{i} \rho^{m-i-1}D^{i-j}(\alpha)\\
& \ \ \ \ \ + (-1)^{m-j-1} \left\{ \binom{m}{j+1}-\binom{m-1}{j+1} \right\} D^{m-j-1}(\alpha) \\
%& -y_{m-1}\binom{m-1}{j+1}(-1)^{m-j-1}D^{m-j-1}\\
%&+\binom{m}{j+1}(-1)^{m-j-1}D^{m-j-1}(\alpha)\\
%
%= &y_j\rho^{m-j-1}(\alpha) + \sum_{k=j+1}^{m-2}y_{k}\binom{k}{j}(-1)^{k-j}D^{k-j}(\alpha)\\
%&+y_{m-1}\bigg{\{}\binom{m}{j+1}-\binom{m-1}{j+1}\bigg{\}}(-1)^{m-j-1}D^{m-j-1}(\alpha) \\
%\end{eqnarray*}
%\begin{eqnarray*}
&= y_j\rho^{m-j-1}(\alpha)\\
& \ \ \ \ \  + \sum_{i=j+1}^{m-2} (-1)^{i-j} \binom{i}{j} y_{i} \rho^{m-i-1}D^{i-j}(\alpha)\\
& \ \ \  \ \  + (-1)^{m-j-1} \binom{m-1}{j}D^{m-j-1}(\alpha)\\
%&= y_j\rho^{m-j-1}(\alpha) + \sum_{k=j+1}^{m-1}y_{k}\binom{k}{j}(-1)^{k-j}\rho^{m-k-1}D^{k-j}(\alpha)\\
&=  \sum_{i=j}^{m-1}(-1)^{i-j}\binom{i}{j} y_{i} \rho^{m-i-1} D^{i-j}(\alpha).
\end{align*}
This completes the proof of Lemma 2.1.

\bigskip

\bigskip

Finally we shall prove  Proposition \ref{M1} and Proposition \ref{M2}. 

\bigskip

{\it Proof of Proposition \ref{M1}. }It is obvious by Lemma \ref{S} and Lemma \ref{L}. 

\bigskip

{\it Proof of Proposition \ref{M2}. }Let $f$ be  Hirata separable  in $B[X;\rho,D]$. 
It follows from Lemma \ref{HS} and Lemma \ref{L}  that there exist $g_i \in V_0$ 
and %$h_i \in V_{m-1}$ 
$\sum_{j=0}^{m-1}y_j h_i \otimes x^j \in (A\otimes_BA)^A$  with $h_i \in V_{m-1}$ 
such that 
\[
1 \otimes 1 = \sum_i g_i \sum_{j=0}^{m-1}y_j h_i \otimes x^j
= \sum_{j=0}^{m-1} \left(\sum_i g_i y_jh_i \right)\otimes x^j.
\]   
This implies 
\[
\sum_i g_i y_0 h_i =1 \ \  {\rm and}  \ \ \sum_i g_i y_k h_i =0 \ (1 \leq k \leq m-1).
\]
Then we obtain inductively,
\[
\sum_i g_i x^k h_i = 0 \ (0 \leq k \leq m-2)  \ \ {\rm and} \ \
\sum_i g_i x^{m-1} h_i = 1.
\]

Conversely, let  $g_i$ be in $V_0$ and  $h_i $  in $V_{m-1}$ such that 
$\sum_i g_ix^{m-1}h_i = 1$ and $\sum_i g_ix^kh_i = 0 \ \ (0 \leq k \leq m-2)$.   
Note that $\sum_{j=0}^{m-1} y_j h_i\otimes x^j \in (A \otimes_B A)^A$ 
by Lemma \ref{L}. 
By an easy induction, we see that 
\[
\sum_i g_i y_k h_i =0 \ (1 \leq k \leq m-1) \ \ {\rm and} \ \ \sum_i g_i y_{0} h_i =1.
\]
This implies 
\[
\sum_i g_i\sum_{j=0}^{m-1} y_j h_i\otimes x^j
= \sum_{j=0}^{m-1} \left(\sum_i g_i y_j h_i \right) \otimes x^j
= 1\otimes 1.
\]
Therefore,    
$f$ is  Hirata separable in $B[X;\rho,D]$ by Lemma \ref{HS}. 
 This complets the proof of Proposition \ref{M2}.

\bigskip

\bigskip

{\bf Acknowledgement.} The author would like to thank the referee for his valuable comments and suggestions.


\begin{thebibliography}{99}

\bibitem{HS}{ K. Hirata and K. Sugano,
On semisimple extensions and separable extensions over noncommutative rings,
\em J. Math. Soc. Japan }{\bf 18} (1966), 360--373. 

\bibitem {I1}{ S. Ikehata,
On separable polynomials and Frobenius polynomials in skew
polynomial rings,
\em Math. J. Okayama Univ.,} {\bf 22} 1980,   115--129.

\bibitem {I2}{ S. Ikehata,
Azumaya algebras and skew polynomial rings,
\em Math, J. Okayama Univ.,} {\bf 23}  1981,  19--32.

\bibitem {I5}{ S. Ikehata,
A note on separable polynomials in skew polynomial rings of
derivation type,
\em Math. J. Okayama Univ.,} {\bf 22} 1980,  59--60.

\bibitem {I8}{ S. Ikehata,
On $H$-separable polynomials of prime degree,
\em Math. J. Okayama Univ.,} {\bf 33}  1991,   21--26.

%\bibitem {I9}{ S. Ikehata,
%Purely inseparable ring extensions and $H$-separable polynomials,
%\em Math. J. Okayama Univ.,}  {\bf 40} 1998,   55--63.

%\bibitem {I10}{ S. Ikehata,
%Purely inseparable ring extensions and Azumaya algebras,
%\em Math. J. Okayama Univ.,}  {\bf 41} 1999,   63--69.

\bibitem {I11}{ S. Ikehata,
On $H$-separable and Galois polynomials of degree $p$ in skew polynomial rings, 
\em Int. Math. Forum,}  {\bf 3} 2008, no. 29-32, 1581-1586. 

\bibitem {I12}{ S. Ikehata,
On separable and $H$-separable polynomials of degree $p$ in skew polynomial rings,
\em Int. J. Pure Appl. Math.,} {\bf \ 51} 2009, no. 1, 149--156.

%\bibitem {J}{ N. Jacobson, 
%Lectures in abstract algebra. Vol III: Theory of fields and Galois theory, 
%\em D. Van Nostrand Co., Inc.,} 1964

\bibitem {K1}{ K. Kishimoto,
On abelian extensions of rings. I,
\em Math. J. Okayama Univ.,}  {\bf 14} 1970,   159--174.

\bibitem {M}{ Y. Miyashita,
On a skew polynomial ring,
\em J. Math. Soc. Japan,}
{\bf 31} 1979,  no. 2,   317--330.

\bibitem {N1}{ T. Nagahara,
On separable polynomials of degree $2$ in skew polynomial rings,
\em Math. J. Okayama Univ.,}  {\bf 19} 1976,   65--95.

\bibitem {N2}{ T. Nagahara, 
A note on separable polynomials in skew polynomial rings of atutomorphism type, 
\em Math. J. Okayama Univ.,}  {\bf 22} 1980,  73--76. 

%\bibitem {N3}{ T. Nagahara,
%A note on imbeddings of noncommutative separable extensions in Galois extensions,
%\em Houston J. Math.,} {\bf 12} 1986,   411--417.

\bibitem {OS}{ H. Okamoto and S. Ikehata,
On $H$-separable polynomials of degree $2$,
\em Math. J. Okayama Univ.,} {\bf 32} 1990,  53--59.

\bibitem {S1}{ K. Sugano,
Separable extensions and Frobenius extensions,
\em Osaka J. Math. }{\bf 7} (1970), 29--40.

%\bibitem {S2}{ K. Sugano, 
%Note on cyclic Galois extensions,
%\em Proc. Japan Acad.,} {\bf 57}, Ser. A 1981, 60--63.

\bibitem {SX1}{ G. Szeto and L. Xue,
On the Ikehata theorem for $H$-separable skew polynomial rings,
\em Math. J. Okayama Univ.,} {\bf 40} 1998,  27--32.

\bibitem {YI1} {S. Yamanaka and S. Ikehata, 
An alternative proof of Miyasita's theorem in a skew polynomial rings,
\em Int. J. Algebra,} {\bf  21} 2012, 1011--1023

%\bibitem{YI2} {S. Yamanaka and S. Ikehata, 
%On Galois polynomials of degree $p$ in skew polynomial rings of derivation type, 
%\em Southeast Asian Bull. Math.,} {\bf 37} 2013, 625--634.


\end{thebibliography}
\end{document}